\newtheorem{theorem}{Theorem}
\newtheorem{proposition}{Proposition}
\newtheorem{lemma}{Lemma}
\newcommand\myfootnote[1]{
\renewcommand{\thefootnote}{}
\footnotetext{#1}
\def\thefootnote{\@arabic\c@footnote}
}
\newcommand{\zero}{\mbox{\normalfont 0}}
\newcommand{\rvline}{\hspace*{-\arraycolsep}\vline\hspace*{-\arraycolsep}}
\renewcommand{\subsection}{\@startsection{subsection}{2}{0mm}{-\baselineskip}{-5pt}{\it \bf}}
\title{Ideals of general linear Lie  algebras of infinite-dimensional  vector spaces}
\author{Oksana Bezushchak, Waldemar Ho{\l}ubowski, Bogdana Oliynyk\footnote{The third author   was partially supported by the grant for scientific researchers from the ``Povir u sebe'' Ukrainian Foundation.}}
\begin{document}
	
	\maketitle
	
\small \noindent Faculty of Mechanics and Mathematics,
Taras Shevchenko National University of Kyiv, Ukraine \\
Faculty of Applied Mathematics, Silesian University of Technology, Poland \\ Department of Mathematics, National University
of Kyiv-Mohyla Academy, Ukraine

bezushchak@knu.ua, w.holubowski@polsl.pl, oliynyk@ukma.edu.ua

{\it {\bf Keywords:}  general linear Lie algebra, linear transformation, infinite-dimensional vector space,  ideal}

{\bf 2020}{ {\bf Mathematics Subject Classification:} 17B60, 17B65}

\begin{abstract}
Let $V$   be an infinite-dimensional  vector space over a field of characteristic not equal to $2.$ We classify ideals of  the Lie algebra   $\mathfrak{gl}(V)$ of all linear transformations of the space $V.$
\end{abstract}

\section*{Introduction}

Let $\mathbb{F}$ be a field of characteristic not equal to $2$. Any associative algebra $A$ over the field  $\mathbb{F}$   gives rise to the Lie algebra $A^{(-)} = (A, [a, b] = ab-ba)$. Let $V$   be an infinite-dimensional  vector space over $\mathbb{F}$. In this paper, we consider  the algebras $\text{End}_{\mathbb{F}}(V)$ of all linear transformations $V \to V$ and $\mathfrak{gl}(V)=\text{End}_{\mathbb{F}}(V)^{(-)}$ the general Lie algebra  of $V.$ 

For a cardinal  $\alpha \le \dim_{\mathbb{F}} V$ denote by $I_{\alpha}$ the ideal of all linear transformations $\varphi :V \rightarrow V$ such that $\dim_{\mathbb{F}} \varphi(V)$ is less than $\alpha$.  In particular, for the countable cardinal $\aleph_0$ the ideal $I_{\aleph_0}$ consists of all linear transformations of finite ranges.  Let $\text{Id}_V$ denote the identity transformation.

 N.~Jacobson \cite{Jacobson} classified ideals of the associative ring $\text{End}_{\mathbb{F}}(V)$ of linear transformations of $V.$ He proved that:

\emph{all ideals of the ring $\text{\emph{End}}_{\mathbb{F}}(V)$ of linear transformations of an infinite-dimensional vector space $V$ are: $(0),$  $I_{\alpha},$ where $\aleph_0 \le \alpha \le \dim_{\mathbb{F}} V, $ and $\text{\emph{End}}_{\mathbb{F}}(V).$}

The purpose of this paper is the classification of ideals of the algebra  $\mathfrak{gl}(V).$

\begin{theorem} 	\label{Thm_ideals}
Let $V$   be an infinite-dimensional  vector space over a field $\mathbb{F}$ of characteristic not equal to $2.$	Every  ideal of $\mathfrak{gl}(V)$ belongs to one of the following families:
	\begin{enumerate}[$(1)$]
		\item
		$\{0\},$ $\mathbb{F} \cdot \emph{Id}_{V},$ $\mathfrak{gl}(V);$
		\item
		$I_{\alpha}$ or $\mathbb{F} \cdot \emph{Id}_{V}+ I_{\alpha},$ where $\aleph_0 \le \alpha \le \dim_{\mathbb{F}} V ;$
		\item
		
		any subspase $U,$
		$$[I_{\aleph_0},\mathfrak{gl}(V)]\subseteq U \subseteq \mathbb{F} \cdot \emph{Id}_{V}+ I_{\aleph_0}.$$
		
	\end{enumerate}	
\end{theorem}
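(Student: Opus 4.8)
The plan is to first verify that every family listed is genuinely an ideal, and then to prove the converse by squeezing an arbitrary ideal between an associative ideal and its ``commutator part.'' I would begin by recording several structural facts about $R:=\mathrm{End}_{\mathbb F}(V)$ and $\mathfrak g:=\mathfrak{gl}(V)$. First, the center of $\mathfrak g$ is exactly $Z:=\mathbb F\cdot\mathrm{Id}_V$, since a transformation commuting with every rank-one idempotent must be scalar. Second, I would compute the relevant commutator ideals: using the matrix units $E_{ij}$ attached to a basis, one checks that $[I_{\aleph_0},\mathfrak g]$ is precisely the set of finite-rank transformations of trace $0$ (the trace is well defined on $I_{\aleph_0}$, vanishes on every commutator $[\varphi,x]$ with $\varphi$ of finite rank, and conversely each trace-zero finite-rank map is a sum of commutators $[\,v\otimes g,\;w\otimes f\,]$). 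For $\alpha>\aleph_0$ I would show $[I_\alpha,\mathfrak g]=I_\alpha$ and $[\mathfrak g,\mathfrak g]=\mathfrak g$: on infinite-rank pieces the trace obstruction disappears, and the classical fact that $\mathrm{Id}_V$ is itself a commutator on an infinite-dimensional space makes $\mathfrak g$ perfect. Granting these, the easy direction is routine: the $I_\alpha$ are associative, hence Lie, ideals; adding the central line keeps them ideals; and for family $(3)$ one notes $[\,\mathbb F\cdot\mathrm{Id}_V+I_{\aleph_0},\,\mathfrak g\,]=[I_{\aleph_0},\mathfrak g]$, so any $U$ with $[I_{\aleph_0},\mathfrak g]\subseteq U\subseteq \mathbb F\cdot\mathrm{Id}_V+I_{\aleph_0}$ satisfies $[U,\mathfrak g]\subseteq[I_{\aleph_0},\mathfrak g]\subseteq U$ and is therefore a Lie ideal.

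For the converse, let $L$ be a nonzero Lie ideal. The first key step is a reduction: if $L\not\subseteq Z$, then $L\supseteq[I_{\aleph_0},\mathfrak g]$. The entry point is clean. If $a\in L$ is not scalar, choose $v$ with $v$ and $av$ linearly independent and let $p$ be the rank-one idempotent with $p v=v$ and $p(av)=0$. Since $p$ has rank one, $[a,p]=ap-pa$ has finite rank, lies in $L$, and satisfies $[a,p](v)=av\neq0$, so $L$ contains a nonzero finite-rank transformation. I would then show that a single nonzero finite-rank element of $L$ forces all of $[I_{\aleph_0},\mathfrak g]$ into $L$: bracketing with suitable matrix units localizes one matrix unit $E_{ij}$ $(i\neq j)$ into $L$, and from one such $E_{ij}$ the relations $[E_{ki},E_{ij}]=E_{kj}$, $[E_{ij},E_{jl}]=E_{il}$ and $[E_{ij},E_{ji}]=E_{ii}-E_{jj}$ generate, together with the rank-one maps $v\otimes f$ obtained by bracketing, every trace-zero finite-rank transformation.

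The second key step determines the ``ceiling.'' Let $A$ be the associative two-sided ideal of $R$ generated by $[L,\mathfrak g]$. By Jacobson's theorem $A\in\{0,\,I_\alpha\ (\alpha\ge\aleph_0),\,R\}$, and $A=0$ exactly when $L\subseteq Z$. I would establish the two containments $[A,\mathfrak g]\subseteq L\subseteq A+Z$. The upper bound is the softer half: every $\ell\in L$ satisfies $[\ell,x]\in[L,\mathfrak g]\subseteq A$ for all $x$, so $\ell$ is central in $R/A$; computing the center of $R/I_\alpha$ to be the image of $\mathbb F\cdot\mathrm{Id}_V$ (if $[\varphi,x]$ has rank $<\alpha$ for all $x$, then $\varphi\in\mathbb F\cdot\mathrm{Id}_V+I_\alpha$) yields $\ell\in A+Z$. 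The lower bound $[A,\mathfrak g]\subseteq L$ is the heart of the argument: since $[L,\mathfrak g]\subseteq L$, one must show that bracketing the associative ideal it generates back against $R$ does not escape $L$, which is a Herstein-type commutator-identity computation exploiting that $L$ already contains all of $[I_{\aleph_0},\mathfrak g]$.

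With the sandwich $[A,\mathfrak g]\subseteq L\subseteq A+Z$ in hand, the classification falls out by cases on $A$, using the perfectness facts from the first step. If $A=R$ then $[A,\mathfrak g]=\mathfrak g$ forces $L=\mathfrak g$. If $A=I_\alpha$ with $\alpha>\aleph_0$ then $[A,\mathfrak g]=I_\alpha$, so $I_\alpha\subseteq L\subseteq I_\alpha+Z$; as $(I_\alpha+Z)/I_\alpha$ is one-dimensional and $\mathrm{Id}_V\notin I_\alpha$, we get $L=I_\alpha$ or $L=\mathbb F\cdot\mathrm{Id}_V+I_\alpha$, which is family $(2)$. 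Finally, if $A=I_{\aleph_0}$ then $[A,\mathfrak g]=[I_{\aleph_0},\mathfrak g]$, so $[I_{\aleph_0},\mathfrak g]\subseteq L\subseteq \mathbb F\cdot\mathrm{Id}_V+I_{\aleph_0}$; here the quotient $(\mathbb F\cdot\mathrm{Id}_V+I_{\aleph_0})/[I_{\aleph_0},\mathfrak g]$ is two-dimensional (spanned by the classes of $\mathrm{Id}_V$ and of a rank-one idempotent, the latter detected by the trace), and since $\mathfrak g$ brackets this whole chunk into $[I_{\aleph_0},\mathfrak g]$, every intermediate subspace is an ideal --- this is exactly family $(3)$ and the sole source of a continuum of ideals. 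The main obstacles I anticipate are the lower bound $[A,\mathfrak g]\subseteq L$ (the Lie-to-associative passage) and the infinite-rank commutator identities underlying $[I_\alpha,\mathfrak g]=I_\alpha$ and the perfectness of $\mathfrak g$; the center-of-quotient computation and the matrix-unit bookkeeping are technical but routine.
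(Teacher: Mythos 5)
Your overall architecture --- classify the associative ideal generated by a commutator space via Jacobson's theorem, trap the Lie ideal $L$ in a sandwich $[A,\mathfrak{g}]\subseteq L\subseteq \mathbb{F}\cdot\mathrm{Id}_V+A$, and finish with the trace/codimension-two analysis at $\alpha=\aleph_0$ --- is exactly the paper's. The upper half of your sandwich is sound (and, with your choice $A=\mathrm{id}_R([L,\mathfrak{g}])$, even slightly softer than the paper's route, which instead invokes Herstein's lemma that an abelian Lie ideal of a prime ring is central, together with primeness of $\mathrm{End}_{\mathbb{F}}(V)/I_\alpha$ and the computation of its center). The genuine gap is the lower half, $[A,\mathfrak{g}]\subseteq L$, which you yourself call the heart and defer to ``a Herstein-type commutator-identity computation exploiting that $L$ already contains $[I_{\aleph_0},\mathfrak{g}]$.'' That exploitation cannot work when $A=I_\alpha$ with $\alpha>\aleph_0$, or when $A=R$: knowing $[I_{\aleph_0},\mathfrak{g}]\subseteq L$ gives no purchase on transformations of infinite rank, since brackets and linear combinations of finite-rank maps remain finite-rank, so the finite-rank part of $L$ can never bootstrap to rank-$\beta$ elements for $\aleph_0\le\beta<\alpha$; yet that is what $I_\alpha\subseteq L$ requires, and no identity in your sketch produces such elements from the mere fact that $[L,\mathfrak{g}]$ generates $I_\alpha$ associatively. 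The paper fills precisely this hole by quoting the Alahmadi--Alsulami lemma: for any Lie ideal $U$ of $A^{(-)}$ over a field of characteristic $\ne 2$ one has $[\mathrm{id}_A([U,U]),A]\subseteq U$. Note the hypothesis involves $[U,U]$, not $[U,A]$: the lemma does not apply verbatim to your ideal $\mathrm{id}_R([L,\mathfrak{g}])$, and proving $\mathrm{id}_R([L,\mathfrak{g}])=\mathrm{id}_R([L,L])$ in this ring already needs the center-of-quotient analysis, so as stated your plan is close to circular at its decisive step. The fix is to generate $A$ from $[L,L]$, quote (or reprove) the Alahmadi--Alsulami lemma for the lower bound, and handle $\mathrm{id}_R([L,L])=0$ via Herstein's centrality lemma in the prime ring $R$ --- which is exactly the paper's Cases 1--3.

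Two smaller repairs. Perfectness of $\mathfrak{g}$ does not follow from $\mathrm{Id}_V$ being a single commutator (perfect means every element is a sum of commutators); the paper cites Bezushchak's result $\mathfrak{gl}(V)=[\mathfrak{gl}(V),\mathfrak{gl}(V)]$ for this, and that citation is also what closes the case $A=R$. Similarly, $I_\alpha=[I_\alpha,I_\alpha]$ for $\alpha>\aleph_0$ is not just ``the trace obstruction disappearing'': the paper's Lemma 6(1) proves it by splitting $V=V'\oplus V''$ with $\varphi(V)\subseteq V'$ and $\aleph_0\le\dim_{\mathbb{F}}V'<\alpha$, writing $\varphi=\varphi_1+\varphi_2$, using perfectness of $\mathfrak{gl}(V')$ for $\varphi_1$ and the idempotent identity $\varphi_2=[p,\varphi_2]$ for $\varphi_2$. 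Your first key step (a non-central $L$ contains $[I_{\aleph_0},\mathfrak{g}]$, via a finitary matrix-unit argument) is correct and is a genuine, more elementary alternative to part of the machinery --- but, as explained above, it settles the lower bound only in the single case $A=I_{\aleph_0}$, and the remaining cases are where your proposal is missing the essential lemma.
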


Moreover, the co-dimension of $[I_{\aleph_0},\mathfrak{gl}(V)]$ in $\mathbb{F} \cdot \text{Id}_{V}+ I_{\aleph_0}$ is $2.$ Therefore, if the field $\mathbb{F}$ is infinite then the family $(3)$ is an infinite family of ideals.

I.~Penkov and V.~Serganova \cite{Penkov_Serganova} classified ideals of countable-dimensional Mackey Lie algebras and I.~Penkov and A.~Petukhov \cite{Penkov_Petukhov} studied ideals of the enveloping algebra of an infinite-dimensional Lie algebra  viewed as the union of a chain of embeddings of simple finite-dimensional Lie algebras. The paper \cite{Hol-inf-Lie} contains the description of ideals of the algebra $\mathfrak{gl}(V)$ for a  countable-dimensional vector space~$V$. 

In \cite{14_B}, it was shown that the Lie algebra $\mathfrak{gl}(V)$ of an infinite-dimensional vector space $V$ is perfect, i.e. $\mathfrak{gl}(V)= [\mathfrak{gl}(V),\mathfrak{gl}(V)].$

A.~Rosenberg \cite{Rosenberg} (see also \cite{Holubowski_M_Z}) studied normal subgroups of the group $GL(V)$ of invertible  linear transformations on   $V.$

For some results on the structure of other associative and Lie algebras of infinite matrices see \cite{BaranovBahtZal,Baranov2,Baranov_Strade,14,BezOl,BezOl_2}.

\section{Ideals of the algebra $\mathfrak{gl}(V)$}

Let $U$ be a subset of an associative algebra  $A.$ Denote by $\text{id}_A(U)$ the ideal of the algebra $A,$ which is generated $U.$

\begin{lemma}[Alahmadi-Alsulami, \cite{Alahm}]	\label{lemma 2} Let $A$ be an associative algebra over a field $\mathbb{F}$ of characteristic  not equal to $2$  and let $U$ be  an  ideal of the Lie algebra $A^{(-)}.$ Then
	$$[\text{\emph{id}}_A([U,U]), A]\subseteq U.$$
	
\end{lemma}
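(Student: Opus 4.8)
The statement is an inclusion of the form $\text{id}_A([U,U])\subseteq N$, where I set $N:=\{x\in A:[x,A]\subseteq U\}$; indeed, $[\text{id}_A([U,U]),A]\subseteq U$ says precisely that every element of the associative ideal generated by $[U,U]$ lies in $N$. So the plan is to exhibit a subspace $P$ with $\text{id}_A([U,U])\subseteq P\subseteq N$. Throughout I will use only that $U$ is a Lie ideal, in the forms $[U,A]\subseteq U$ (hence $[U,U]\subseteq U$) and $aw\equiv wa\pmod U$ for every $w\in U$, $a\in A$ (since $aw-wa=-[w,a]\in U$); the Jacobi identity $[a,[u,v]]=[u,[a,v]]+[v,[u,a]]$ will show $[a,[u,v]]\in[U,U]$, because $[a,v],[u,a]\in U$.

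The key observation is a short commutator identity: for $p,q\in U$ and $c\in A$,
\[
[pq,c]=[p,qc]-[cp,q].
\]
Both terms on the right lie in $U$, because $p,q\in U$ and $U$ is a Lie ideal; hence $[pq,c]\in U$. In other words, every product of two elements of $U$ lies in $N$. Writing $U^{2}$ for the linear span of the products $pq$ with $p,q\in U$, this together with the evident inclusion $U\subseteq N$ gives $P:=U+U^{2}\subseteq N$, so it remains only to prove $\text{id}_A([U,U])\subseteq P$.

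For this I reduce the ideal generators into $P$. First, right multiplication of a commutator is controlled by the identity $[u,v]c=[u,vc]-v[u,c]$ (a direct expansion), whose first summand lies in $U$ and whose second lies in $U^{2}$; hence $[U,U]\cdot A\subseteq P$. Passing to the unitisation if necessary, a typical generator has the shape $a[u,v]b$, and here I move the left factor across by means of $a[u,v]=[u,v]a+[a,[u,v]]$, obtaining
\[
a[u,v]b=[u,v]\,ab+[a,[u,v]]\,b.
\]
The first term is $[u,v]$ right-multiplied by the single element $ab$, and the second is $[a,[u,v]]\in[U,U]$ right-multiplied by $b$; by the previous step both lie in $[U,U]\cdot A\subseteq P$. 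The degenerate generators $[u,v]$, $a[u,v]$, $[u,v]b$ are subsumed by these. Thus $\text{id}_A([U,U])\subseteq P\subseteq N$, which is the assertion.

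The one place a naive approach stalls is exactly the appearance of terms in which a \emph{single} factor from $U$ is glued to an algebra element (such as $a\cdot u$ or $u\cdot c$), which need not lie in $U$; the purpose of the two reductions above is to rewrite each generator, modulo $U$, as a sum of \emph{two}-factor products $pq$ with $p,q\in U$, to which the key identity applies. I expect this bookkeeping to be the only real work. It is worth noting that the argument uses nothing about the characteristic, so the hypothesis that the characteristic is not $2$ is not needed for this particular inclusion.
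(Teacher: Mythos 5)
Your proof is correct, and it is worth noting at the outset that the paper itself offers nothing to compare it against: Lemma~\ref{lemma 2} is quoted from Alahmadi--Alsulami \cite{Alahm} without proof, so your argument is a self-contained substitute rather than a variant of anything in the text. The logical skeleton is sound: setting $N=\{x\in A:[x,A]\subseteq U\}$, the claim is exactly $\text{id}_A([U,U])\subseteq N$, and you sandwich the ideal via $P=U+U^2$. I checked the three identities that carry the argument: $[pq,c]=[p,qc]-[cp,q]$ (both right-hand terms in $U$ since $p,q\in U$, giving $U^2\subseteq N$), $[u,v]c=[u,vc]-v[u,c]$ (giving $[U,U]\cdot A\subseteq U+U^2$), and $a[u,v]b=[u,v]ab+[a,[u,v]]b$ with $[a,[u,v]]=[u,[a,v]]+[v,[u,a]]\in[U,U]$ by Jacobi; all three expand correctly, and the passage to the unitisation to absorb the degenerate generators $[u,v]$, $a[u,v]$, $[u,v]b$ is legitimate since $U$ remains a Lie ideal of $\hat A=\mathbb{F}\cdot 1\oplus A$ and $[\,\cdot\,,1]=0$. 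This is essentially the classical Herstein-style commutator bookkeeping underlying the cited result, so it is probably close in spirit to the source; what your write-up adds, beyond making the paper self-contained, is the explicit observation that no division by $2$ ever occurs, so this particular inclusion holds in every characteristic --- in the paper the hypothesis $\mathrm{char}\,\mathbb{F}\neq 2$ is genuinely used elsewhere, namely in Herstein's Lemma~\ref{lemma3} on abelian Lie ideals of prime algebras, not here.
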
	
 An associative algebra $A$ is called a {\it prime} algebra if for any nonzero ideals $I, J \in A$ the product $I \cdot J$ is also nonzero.

The following lemma is due to I.~Herstein \cite{Herst},

 \begin{lemma}[\cite{Herst}] 	\label{lemma3} 
	Let $A$ be a unital associative prime algebra over a field of  characteristic $\ne 2.$ Let $U$ be an  ideal in $A^{(-)}$ such that $[U,U]=(0)$.   Then $U $ lies in the center of $A$.
\end{lemma}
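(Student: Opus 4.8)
The plan is to fix an arbitrary $u \in U$ and prove that $u$ commutes with every $a \in A$; since this holds for each element of $U$, it will give $U \subseteq Z(A)$. The central object is the inner derivation $d = \mathrm{ad}_u \colon A \to A$, $d(a) = [u,a]$. Because $U$ is a Lie ideal of $A^{(-)}$, we have $d(a) = [u,a] \in U$ for every $a \in A$; and since $[U,U] = (0)$ with both $u$ and $d(a)$ lying in $U$, we obtain $d^2(a) = [u,[u,a]] = 0$. So the first step is simply to record that $d$ is a derivation of $A$ satisfying $d^2 = 0$.

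The second step exploits the interaction between $d^2 = 0$ and the Leibniz rule. Applying $d$ twice to a product $ab$ and using $d^2 = 0$ on each factor yields $0 = d^2(ab) = 2\,d(a)\,d(b)$, so the hypothesis $\mathrm{char}\,\mathbb{F} \ne 2$ forces $d(a)\,d(b) = 0$ for all $a,b \in A$; that is, $d(A)\cdot d(A) = (0)$. The decisive refinement is to insert an arbitrary factor: for any $x \in A$ one has $d(a)\,x\,d(b) = d(a)\,d(xb) - d(a)\,d(x)\,b = 0$, since both $d(a)d(xb)$ and $d(a)d(x)$ already vanish. Hence $d(A)\,A\,d(A) = (0)$.

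The final step invokes primeness. As $A$ is unital, the two-sided ideal $I$ generated by $d(A)$ equals $A\,d(A)\,A$ and contains $d(A)$ itself. From $d(A)\,A\,d(A) = (0)$ it follows that $I^2 = A\,d(A)\,A\,d(A)\,A = (0)$, and primeness of $A$ (applied with $J = I$) forces $I = (0)$. Therefore $d(A) = (0)$, i.e.\ $[u,a] = 0$ for all $a \in A$, so $u \in Z(A)$; as $u$ was arbitrary, $U \subseteq Z(A)$.

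I expect the main obstacle to be the passage from the pointwise relation $d(a)d(b) = 0$ to the ideal-theoretic statement $d(A)\,A\,d(A) = (0)$: the naive square $d(A)^2 = (0)$ does not by itself square the generated ideal, and one must notice that the Leibniz rule lets an intermediate factor $x$ be absorbed into $d(xb)$ so that the product is again annihilated. Once that identity is secured, primeness closes the argument at once; the role of $\mathrm{char}\,\mathbb{F}\neq 2$ is confined to extracting $d(a)d(b)=0$ from $2\,d(a)d(b)=0$.
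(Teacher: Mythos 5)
Your proof is correct and complete. Note that the paper itself gives no proof of this lemma --- it is quoted verbatim from Herstein \cite{Herst} --- so there is no in-paper argument to compare against; your argument (pass to the square-zero inner derivation $d=\mathrm{ad}_u$, extract $d(a)d(b)=0$ from $d^2(ab)=2\,d(a)d(b)$ using characteristic $\ne 2$, upgrade to $d(A)\,A\,d(A)=(0)$ via the Leibniz rule, and kill the generated ideal by primeness) is precisely the classical Herstein-style proof, and every step checks out, including the unitality point needed to write the generated ideal as $A\,d(A)\,A$ and the reconciliation with the paper's ideal-pair formulation of primeness (an ideal $I$ with $I^2=(0)$ must be zero).
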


 \begin{lemma}
	\label{lemma4} The factor-algebra  $\text{\emph{End}}_{\mathbb{F}}(V)/{I_{\alpha}}$ is prime  for any $\alpha$, $\aleph_0 \le \alpha \le \dim_{\mathbb{F}} V $.
\end{lemma}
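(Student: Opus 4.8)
The plan is to reduce the primeness of $R:=\mathrm{End}_{\mathbb{F}}(V)/I_{\alpha}$ to Jacobson's classification together with the standard ideal--correspondence for quotient rings. By that correspondence, the ideals of $R$ are exactly the images $\bar I=I/I_{\alpha}$ of those ideals $I$ of $\mathrm{End}_{\mathbb{F}}(V)$ that contain $I_{\alpha}$. By Jacobson's theorem quoted in the Introduction these are the $I_{\beta}$ with $\alpha\le\beta\le\dim_{\mathbb{F}}V$ together with $\mathrm{End}_{\mathbb{F}}(V)$ itself; since $I_{\beta}\subseteq I_{\gamma}$ exactly when $\beta\le\gamma$, they form a chain. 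In particular the zero ideal of $R$ corresponds to $I_{\alpha}$, so a nonzero ideal of $R$ is the image of some ideal $I$ with $I_{\alpha}\subsetneq I$.

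The one computational step is that every such ideal is idempotent. I claim $I_{\beta}^{2}=I_{\beta}$ (and trivially $\mathrm{End}_{\mathbb{F}}(V)^{2}=\mathrm{End}_{\mathbb{F}}(V)$). Given $\varphi\in I_{\beta}$, fix a decomposition $V=\varphi(V)\oplus C$ and let $\pi$ be the projection of $V$ onto $\varphi(V)$ along $C$. Then $\pi$ restricts to the identity on $\varphi(V)$, so $\pi\varphi=\varphi$, while $\operatorname{rank}\pi=\dim_{\mathbb{F}}\varphi(V)=\operatorname{rank}\varphi<\beta$ shows $\pi\in I_{\beta}$. Hence $\varphi=\pi\varphi\in I_{\beta}\cdot I_{\beta}$, which gives $I_{\beta}=I_{\beta}^{2}$.

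With these two facts the conclusion is immediate. Let $\bar I,\bar J$ be nonzero ideals of $R$, lifted to ideals $I,J\supsetneq I_{\alpha}$. Because the ideals containing $I_{\alpha}$ form a chain I may assume $I\subseteq J$, whence $I=I^{2}\subseteq IJ$. Thus $IJ\supseteq I\supsetneq I_{\alpha}$, so the product $\bar I\,\bar J=\overline{IJ}$ is nonzero in $R$, which is exactly the required primeness.

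The only genuine content here is the idempotency/factorization step; everything else is bookkeeping through Jacobson's theorem, and the hypothesis $\operatorname{char}\mathbb{F}\ne2$ plays no role for this associative statement. If one prefers to bypass the quotient correspondence, the same argument runs element-wise: it suffices to show that $a,b\notin I_{\alpha}$ forces $acb\notin I_{\alpha}$ for some $c$. Choosing a complement $S$ of $\ker a$ (so $a|_{S}$ is injective and $\dim_{\mathbb{F}}S=\operatorname{rank}a\ge\alpha$) together with linearly independent families $\{s_{i}\}_{i<\alpha}\subseteq S$ and $\{w_{i}\}_{i<\alpha}\subseteq b(V)$, one defines $c$ on a basis extending $\{w_{i}\}$ by $c(w_{i})=s_{i}$; then for $v_{i}$ with $b(v_{i})=w_{i}$ the vectors $acb(v_{i})=a(s_{i})$ are linearly independent, so $\operatorname{rank}(acb)\ge\alpha$. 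In either route the only point demanding care is set-theoretic rather than algebraic, namely tracking the cardinal inequalities for the ranks; I expect this to be the sole, and minor, obstacle.
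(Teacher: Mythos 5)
Your proof is correct, and your main route is essentially the paper's argument, just phrased positively: the paper likewise reduces via Jacobson's theorem to the chain of ideals $I_{\beta}/I_{\alpha}$, supposes $I_{\beta}^{2}\subseteq I_{\alpha}$ for some $\beta>\alpha$, and refutes this with a single idempotent --- a projection $\rho$ onto a subspace of dimension exactly $\alpha$, so that $\rho\in I_{\beta}\setminus I_{\alpha}$ while $\rho=\rho^{2}\in I_{\beta}^{2}\subseteq I_{\alpha}$, a contradiction --- whereas you prove the stronger fact $I_{\beta}^{2}=I_{\beta}$ via the factorization $\varphi=\pi\varphi$ through a projection onto $\varphi(V)$. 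The underlying idea is identical (a projection witnesses that squaring the ideal does not lower rank), but your formulation avoids the argument by contradiction and handles all pairs in the chain at once. Your parenthetical element-wise argument is the genuinely different contribution: it bypasses Jacobson's classification entirely, showing directly that $a,b\notin I_{\alpha}$ yield some $c$ with $acb\notin I_{\alpha}$ (route an independent family of cardinality $\alpha$ inside $b(V)$ into a complement of $\ker a$, where $a$ is injective), and the needed cardinal bookkeeping is sound since $a\notin I_{\alpha}$ means $\operatorname{rank}a\ge\alpha$ by trichotomy of cardinals. That version makes the lemma self-contained and proves the stronger element-wise primeness $\bar{a}R\bar{b}\neq 0$, at the cost of an explicit construction; the paper's (and your first) route buys brevity by leaning on Jacobson's theorem, which the paper quotes anyway. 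You are also right that the hypothesis $\operatorname{char}\mathbb{F}\neq 2$ plays no role in this associative statement.
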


\begin{proof} Suppose that the algebra  $\text{End}_{\mathbb{F}}(V)/{I_{\alpha}}$ has two proper ideals whose product is $(0)$. N.~Jacobson  \cite{Jacobson} proved that an arbitrary proper ideal of the algebra $\text{End}_{\mathbb{F}}(V)$ looks as $I_{\alpha},$ $\aleph_0 \le \alpha \le \dim_{\mathbb{F}} V.$ Hence, by Jacobson's Theorem these ideals are ${I_{\beta}}/{I_{\alpha}}$ and  ${I_{\gamma}}/{I_{\alpha}}$, where
	$\alpha <\beta,\gamma \leq  \dim_{\mathbb{F}} V$. If $\beta \le \gamma$ then $I_{\beta } \subseteq I_{\gamma}$ and
	$$I_{\beta}^2\subseteq I_{\alpha}.$$
	There exists a subspace  $W \subset V$, $\dim_{\mathbb{F}}W=\alpha.$  Let $\rho: \ V \to W$ be a projection of $V$ onto $W$. Then $\rho \in I_{\beta} \setminus I_{\alpha} $, $\rho ^2=\rho,$ a contradiction.
\end{proof}

\begin{proposition}
	\label{prop5} The center $C$  of the algebra $\text{\emph{End}}_{\mathbb{F}}(V)/{I_{\alpha}}$ is $$(\mathbb{F} \cdot \text{Id}_V + I_{\alpha})/{I_{\alpha}},$$ where $\alpha$, $\aleph_0 \le \alpha \le \dim_{\mathbb{F}} V .$
\end{proposition}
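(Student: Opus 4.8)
The plan is to show the two inclusions separately. First I would prove that every element of $(\mathbb{F} \cdot \text{Id}_V + I_{\alpha})/{I_{\alpha}}$ is central. This direction is essentially immediate: the scalar multiples of $\text{Id}_V$ commute with everything in $\text{End}_{\mathbb{F}}(V)$, hence their images commute with everything modulo $I_{\alpha}$, and the images of elements of $I_{\alpha}$ are zero in the quotient. So the nontrivial content is the reverse inclusion, namely that a coset $\varphi + I_{\alpha}$ lying in the center $C$ forces $\varphi \in \mathbb{F}\cdot\text{Id}_V + I_{\alpha}$.

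For the reverse inclusion, suppose $\varphi + I_{\alpha}$ is central, i.e. $[\varphi, \psi] \in I_{\alpha}$ for every $\psi \in \text{End}_{\mathbb{F}}(V)$. The goal is to produce a scalar $\lambda \in \mathbb{F}$ with $\varphi - \lambda\cdot\text{Id}_V \in I_{\alpha}$. The natural approach is to exploit that $\varphi$ commutes modulo a ``small'' ideal with all rank-one operators. For a vector $v \in V$ and a functional $f$, consider the rank-one map $\psi = f\otimes v$ (sending $u \mapsto f(u)v$). The commutator condition $[\varphi,\psi]\in I_{\alpha}$ restricts the possible values of $\varphi$ strongly. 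I would analyze how $\varphi$ acts on the images of these rank-one maps: the commutation relation, read off on suitable vectors, should show that $\varphi v$ is a scalar multiple of $v$ up to a correction lying in a subspace of dimension $<\alpha$. The standard argument that a transformation commuting (in the strict sense) with all rank-one operators is scalar must here be carried out modulo $I_{\alpha}$, which is what makes the statement genuinely about the quotient rather than about $\text{End}_{\mathbb{F}}(V)$ itself.

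The main obstacle will be making the ``scalar up to $I_{\alpha}$'' conclusion uniform in the vector $v$: a priori the commutation relations might yield, for each $v$, a scalar $\lambda_v$ with $\varphi v - \lambda_v v$ lying in a small subspace, and one must show these scalars can be taken to be a single $\lambda$ independent of $v$. The plan is to compare $\lambda_v$ for two linearly independent vectors $v, w$ by applying the commutation condition to an operator mixing $v$ and $w$ (for instance a rank-one or rank-two map carrying $v$ to $w$); the constraint that the resulting commutator have range of dimension $<\alpha$, combined with $\alpha \le \dim_{\mathbb{F}} V$ being infinite, should force $\lambda_v = \lambda_w$ for all but a set of vectors spanning a subspace of dimension $<\alpha$. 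Once a common $\lambda$ is fixed, $\varphi - \lambda\cdot\text{Id}_V$ annihilates $V$ modulo a subspace of dimension $<\alpha$, i.e. has range of dimension $<\alpha$, placing it in $I_{\alpha}$ and completing the proof.

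I note that primeness of $\text{End}_{\mathbb{F}}(V)/{I_{\alpha}}$ (Lemma~\ref{lemma4}) is the key structural fact in the background here, since the expected downstream use of this proposition is to combine it with Herstein's Lemma~\ref{lemma3}, whose hypotheses require precisely a unital prime algebra and an identification of its center.
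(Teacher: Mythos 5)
Your easy inclusion is fine, but the hard direction rests on an idea that cannot work: you propose to extract information from the condition $[\varphi,\psi]\in I_{\alpha}$ by taking $\psi=f\otimes v$ a rank-one operator (and later ``a rank-one or rank-two map carrying $v$ to $w$''). For any $\psi$ of finite rank --- indeed of any rank $<\alpha$ --- both $\varphi\psi$ and $\psi\varphi$ have rank at most $\operatorname{rank}\psi$, so $[\varphi,\psi]$ has rank less than $\alpha$ automatically, for \emph{every} $\varphi\in\text{End}_{\mathbb{F}}(V)$. Since $\alpha\geq\aleph_{0}$, the centrality hypothesis is therefore vacuous on all your proposed test operators: the classical ``commutes with all rank-one maps, hence scalar'' argument degenerates completely when carried out modulo $I_{\alpha}$, and no amount of care with the scalars $\lambda_{v}$ can repair it, because no constraint on $\varphi$ has been produced in the first place. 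A second, related gap: even granting ``for each $v$ there is $\lambda_{v}$ with $\varphi v-\lambda_{v}v$ in a subspace of dimension $<\alpha$,'' your small subspaces vary with $v$, and a union of $<\alpha$-dimensional subspaces over all $v\in V$ need not be small; without a single fixed small subspace modulo which everything happens, the final step ``hence $\varphi-\lambda\cdot\text{Id}_{V}$ has range of dimension $<\alpha$'' does not follow.

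The paper's proof shows what is actually needed: one must test $z$ against operators of \emph{large} rank. Lemma~\ref{ker} proves $\dim_{\mathbb{F}}\ker z<\alpha$ by considering the associative ideal $I_{\alpha}+\text{End}_{\mathbb{F}}(V)z$, invoking Jacobson's classification of ideals of $\text{End}_{\mathbb{F}}(V)$, and playing a projection onto an $\alpha$-dimensional subspace of $\ker z$ against the decomposition $p=\varphi+az$. Lemma~\ref{subs} then shows that any subspace $W$ with $W\cap z(W)=(0)$ satisfies $\dim_{\mathbb{F}}W<\alpha$, using as test operator a map that annihilates $W$ and is the identity on $z(W)$ --- an operator whose rank is in general as large as $\dim_{\mathbb{F}}V$, which is exactly why the commutator condition bites. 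Finally, Zorn's Lemma supplies a \emph{maximal} such $W$, and maximality forces $z(v)\equiv t_{v}v$ modulo the single fixed small subspace $W\oplus z(W)$ for every $v\notin W\oplus z(W)$; the usual additivity argument on linearly independent $v',v''$ makes $t_{v}$ constant, giving $z-t\cdot\text{Id}_{V}\in I_{\alpha}$. This maximal-subspace device is precisely the mechanism your sketch is missing, both for producing a nonvacuous constraint and for fixing one small subspace uniformly in $v$. (A minor point: primeness of $\text{End}_{\mathbb{F}}(V)/I_{\alpha}$, Lemma~\ref{lemma4}, plays no role in the proof of this proposition; it is used only alongside it, in Case~3 of the proof of Theorem~\ref{Thm_ideals}.)
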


We need several lemmas to prove this  proposition. Let $z \in \text{End}_{\mathbb{F}}(V)$ be an element not lying in  $I_{\alpha}$ and such that $$ [z,\text{End}_{\mathbb{F}}(V)] \subseteq I_{\alpha}.$$

\begin{lemma} 	\label{ker} $\dim_{\mathbb{F}} \ker z < \alpha.$	
\end{lemma}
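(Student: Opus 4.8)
The plan is to argue by contradiction: I assume $\dim_{\mathbb{F}} \ker z \geq \alpha$ and exhibit a single transformation $g \in \text{End}_{\mathbb{F}}(V)$ for which $[z,g] \notin I_{\alpha}$, contradicting the standing hypothesis $[z,\text{End}_{\mathbb{F}}(V)] \subseteq I_{\alpha}$. The guiding observation is that if I can force $\text{Im}(g) \subseteq \ker z$, then $zg = 0$, so the commutator collapses to $[z,g] = -gz$, whose range is just $g(z(V))$. This reduces the task to making $g$ injective on a large enough piece of the image of $z$.

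Concretely, since $z \notin I_{\alpha}$ I have $\dim_{\mathbb{F}} z(V) \geq \alpha$, so I can choose a linearly independent family $\{u_{\xi}\}_{\xi \in \Xi} \subseteq z(V)$ with $|\Xi| = \alpha$. Under the contradiction hypothesis $\dim_{\mathbb{F}} \ker z \geq \alpha$ I can likewise choose a linearly independent family $\{k_{\xi}\}_{\xi \in \Xi} \subseteq \ker z$ indexed by the same set. I then extend $\{u_{\xi}\}$ to a basis of $V$ and define $g$ by $g(u_{\xi}) = k_{\xi}$ and $g = 0$ on the remaining basis vectors. By construction $\text{Im}(g) \subseteq \text{span}\{k_{\xi}\} \subseteq \ker z$.

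With this $g$ one has $zg = 0$, hence $[z,g] = -gz$, and $\text{Im}(gz) = g(z(V)) \supseteq \text{span}\{g(u_{\xi})\} = \text{span}\{k_{\xi}\}$, a subspace of dimension $\geq \alpha$ because the $k_{\xi}$ are independent and there are $\alpha$ of them. Thus $\dim_{\mathbb{F}} [z,g](V) \geq \alpha$, i.e. $[z,g] \notin I_{\alpha}$, which is the desired contradiction; therefore $\dim_{\mathbb{F}} \ker z < \alpha$.

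I do not expect a serious obstacle here. The only points requiring care are the cardinal bookkeeping --- both $z(V)$ and (under the assumption) $\ker z$ contain $\alpha$ independent vectors, so the two families can be indexed by a common set of cardinality $\alpha$ --- together with the harmless fact that an infinite cardinal $\alpha \leq \dim_{\mathbb{F}} \ker z$ guarantees an independent subset of size exactly $\alpha$. The essential trick is steering the image of $g$ into $\ker z$ so that one of the two terms of the commutator vanishes and the rank of the remainder is controlled directly by $z(V)$.
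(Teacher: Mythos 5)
Your proof is correct, but it takes a genuinely different route from the paper's. The paper proceeds through Jacobson's classification: it observes that $I_{\alpha}+\text{End}_{\mathbb{F}}(V)\,z$ is a two-sided ideal of $\text{End}_{\mathbb{F}}(V)$ (two-sidedness itself rests on the hypothesis $[z,\text{End}_{\mathbb{F}}(V)]\subseteq I_{\alpha}$, since $bza=(ba)z+b[z,a]$), notes it strictly contains $I_{\alpha}$, and so by Jacobson's theorem it equals some $I_{\beta}$ with $\beta>\alpha$ or all of $\text{End}_{\mathbb{F}}(V)$; in the first case it writes a projection onto an $\alpha$-dimensional subspace of $\ker z$ as $\varphi+az$ with $\varphi\in I_{\alpha}$ and evaluates on $\ker z$ to get a rank contradiction, and in the second case the same decomposition of $\text{Id}_V$ yields $\ker z\subseteq\varphi(V)$, hence the bound directly. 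You instead bypass the classification entirely: by steering the image of a single $g$ into $\ker z$ you kill the term $zg$, reduce $[z,g]$ to $-gz$, and make its rank at least $\alpha$ by matching an independent family of size $\alpha$ in $z(V)$ (available since $z\notin I_{\alpha}$) with one in $\ker z$ (available under the contradiction hypothesis). Your cardinal bookkeeping is fine, and the construction of $g$ on an extended basis is legitimate. What each approach buys: yours is more elementary and self-contained, exhibiting an explicit single commutator outside $I_{\alpha}$ and using nothing beyond the standing hypothesis and choice; the paper's version leans on Jacobson's theorem, which it has in hand anyway and reuses repeatedly, so its proof is shorter within the paper's ecosystem but less portable.
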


\begin{proof} The subspace $I=I_{\alpha}+\text{End}_{\mathbb{F}}(V) z$ of $\text{End}_{\mathbb{F}}(V)$ is an ideal. The ideal $I$ contains $z$ and, therefore, is strictly larger then $I_{\alpha}$. By Jacobson's Theorem, all ideals of $\text{End}_{\mathbb{F}}(V)$ are of the types $(0),$  $I_{\beta}$, where $\aleph_0 \le \beta \le \dim_{\mathbb{F}}V$, and $\text{End}_{\mathbb{F}}(V)$. 

\vspace{5pt}

\underline{Case 1.} $I_{\alpha}+\text{End}_{\mathbb{F}}(V) z=I_{\beta}$, $\alpha < \beta \le \dim_{\mathbb{F}}V$.

Suppose that $\dim_{\mathbb{F}} \ker z \ge \alpha$. Then there exists a subspace $V'\subseteq \ker z$ such that $\dim_{\mathbb{F}}V' = \alpha$.

Let $p$ be a projection $p: V \to V',$ that is $p(V)\subseteq V'$ and $p(v)=v$ for every element $v \in V'$. Then $p \in I_{\beta} $.  Hence, there exist elements $\varphi \in I_{\alpha}$ and $a \in \text{End}_{\mathbb{F}}(V)$ such that $p=\varphi+az.$ For an arbitrary element $v \in V'$ we have
$$v=p(v)=\varphi(v)+ a(z(v)).$$
Since $V'\subseteq \ker z$, it follows that $z(v)=0$. Hence, $\varphi(v)=v$ for all elements of $V'$. Therefore, $\dim_{\mathbb{F}}\varphi(V) \ge \alpha$, which contradicts the inclusion $\varphi \in I_{\alpha}$.

\vspace{5pt}

\underline{Case 2.} $I_{\alpha}+\text{End}_{\mathbb{F}}(V) z=\text{End}_{\mathbb{F}}(V).$

Recall that $\text{Id}_V$ is the identity transformation on $V$. Again there exist elements  $\varphi \in I_{\alpha}$ and $a \in \text{End}_{\mathbb{F}}(V)$ such that
$$\varphi(v)+ az=\text{Id}_V.$$
If $v \in \ker z$ then
$$v=\varphi(v)+ a(z(v))=\varphi(v),$$
hence $\ker z \subseteq \varphi(V) $. Since $\varphi \in I_{\alpha}$ it implies that $\dim_{\mathbb{F}} \ker z < \alpha$.
\end{proof}

\begin{lemma}
	\label{subs}
	If $W$ is a subspace of $V$ and $$W \cap z(W)=(0)$$ then $\dim_{\mathbb{F}} W< \alpha.$	
\end{lemma}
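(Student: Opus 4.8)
The plan is to argue by contraposition, using the defining property of $z$ that $[z,\text{End}_{\mathbb{F}}(V)]\subseteq I_{\alpha}$. Assume, for contradiction, that $\dim_{\mathbb{F}} W \ge \alpha$. I will then exhibit a single map $p\in \text{End}_{\mathbb{F}}(V)$ whose commutator $[z,p]$ has range of dimension at least $\alpha$; since $[z,p]\in [z,\text{End}_{\mathbb{F}}(V)]\subseteq I_{\alpha}$ must have range of dimension $<\alpha$, this is the desired contradiction.

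The construction of $p$ is where the hypothesis $W\cap z(W)=(0)$ is used. This condition makes the sum $W+z(W)$ direct, so I can pick a complement $U$ and decompose $V = W\oplus z(W)\oplus U$. Let $p$ be the projection of $V$ onto $W$ along $z(W)\oplus U$; thus $p$ is the identity on $W$ and annihilates $z(W)$. The point of this choice is that the commutator isolates $z(W)$: for $w\in W$ one computes $[z,p](w) = z(p(w))-p(z(w)) = z(w)-p(z(w))$, and since $z(w)\in z(W)$ lies in the kernel of $p$, the second term vanishes, leaving $[z,p](w)=z(w)$. Consequently the range of $[z,p]$ contains $z(W)$.

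It then remains to show that $\dim_{\mathbb{F}} z(W)\ge \alpha$, and this is the step where Lemma~\ref{ker} is essential. The restriction $z|_W$ has kernel $W\cap \ker z$, whose dimension is at most $\dim_{\mathbb{F}}\ker z<\alpha$ by Lemma~\ref{ker}, whereas $\dim_{\mathbb{F}} z(W)=\dim_{\mathbb{F}}\bigl(W/(W\cap\ker z)\bigr)$. Since $\dim_{\mathbb{F}} W\ge \alpha\ge \aleph_0$ is infinite, the cardinal identity $\dim_{\mathbb{F}} W = \dim_{\mathbb{F}}(W\cap\ker z)+\dim_{\mathbb{F}} z(W)$ reduces to the larger of the two summands; as $\dim_{\mathbb{F}}(W\cap\ker z)<\alpha\le \dim_{\mathbb{F}} W$ is strictly smaller, the larger summand must be $\dim_{\mathbb{F}} z(W)$, forcing $\dim_{\mathbb{F}} z(W)=\dim_{\mathbb{F}} W\ge \alpha$. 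Hence $\dim_{\mathbb{F}}[z,p](V)\ge \dim_{\mathbb{F}} z(W)\ge\alpha$, contradicting $[z,p]\in I_{\alpha}$, and the lemma follows.

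I expect the only genuinely delicate point to be this last cardinal-arithmetic step: one must be careful that $z$ does not collapse $W$ into a subspace of dimension below $\alpha$, which is exactly what the bound $\dim_{\mathbb{F}}\ker z<\alpha$ from Lemma~\ref{ker} prevents. The algebraic part—choosing $p$ so that $[z,p]$ surjects $W$ onto $z(W)$—is straightforward once the direct-sum decomposition afforded by $W\cap z(W)=(0)$ is in place.
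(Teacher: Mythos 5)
Your argument is correct and takes essentially the same route as the paper: both proofs exploit $W\cap z(W)=(0)$ to build a projection-type map whose commutator with $z$ restricts to $z|_W$ (the paper's $\varphi$ kills $W$ and is the identity on $z(W)$, the mirror image of your $p$), deduce from $[z,\text{End}_{\mathbb{F}}(V)]\subseteq I_{\alpha}$ that $\dim_{\mathbb{F}} z(W)$ is controlled, and finish with Lemma~\ref{ker} plus rank--nullity cardinal arithmetic. The only differences---your contrapositive framing versus the paper's direct conclusion $\dim_{\mathbb{F}} z(W)<\alpha$, and which of $W$, $z(W)$ the auxiliary map fixes---are cosmetic.
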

\begin{proof}
	There exists a linear transformation $\varphi \in \text{End}_{\mathbb{F}}(V)$ such that $\varphi(W)=(0)$ and $\varphi(v)=v$ for an arbitrary element $v \in z(W)$.
	
	By the assumption on $z$, the image of the linear transformation $z\varphi - \varphi z$  has  dimension less than  $\alpha$.
	
	For an arbitrary element $w \in W $ we have $z(\varphi(w))=0$, $\varphi(z(w))=z(w)$. Therefore, $z(w)=(\varphi z- z\varphi)(w)$. The subspace $z(W)$ lies in the image of $\varphi z- z\varphi$, hence $\dim_{\mathbb{F}} z(W)< \alpha$. If $\dim_{\mathbb{F}} z(W)< \alpha$ and $\dim_{\mathbb{F}} \ker z< \alpha$ 	then $\dim_{\mathbb{F}} W < \alpha.$
\end{proof}

\begin{proof}[Proof of Proposition $\ref{prop5}$] Let $W$ be a maximal subspace of $V$ such that $W \cap z(W)=(0)$ (it exists by Zorn's Lemma). Then $\dim_{\mathbb{F}} W < \alpha$.
	
Choose an element $v \in V \setminus (W \oplus z(W))$. 	Then by maximality of $W$, we have 	$$(W+\mathbb{F} v) \cap z(W+\mathbb{F} v) \neq (0).$$ 	Choose an element $w \in W$ and a scalar $\xi \in \mathbb{F}$ such that 	$$0 \neq z(w+ \xi v ) \in W+\mathbb{F} v.$$
	
\vspace{5pt}
	
\underline{Case 1.} $\xi=0$. Then $0 \neq z(w) \in W + \mathbb{F} v $, which is impossible since $z(W) \cap W=(0)$ and $v \notin W \oplus z(W)$.

\vspace{5pt} 

\underline{Case 2.} If $\xi \neq 0$ then $z(w)+\xi z(v)=w'+ \eta v$, where $w' \in W$, $\eta \in \mathbb{F}$. So,
$$z(v)=\frac{\eta}{\xi}\, v \mod (W \oplus z(W)).$$
We proved that for an arbitrary element $v \in V \setminus (W \oplus z(W))$ there exists a scalar $t_v \in \mathbb{F}$ such that
$$z(v)=t_vv \mod (W \oplus z(W)).$$
Let $v', v''\in V$ be linearly independent modulo $W \oplus z(W)$. Then
\begin{equation}
\label{r1}
z(v')=t_{v'}v' \mod ( W \oplus z(W)),
\end{equation}
\begin{equation}
\label{r2}
z(v'')=t_{v''}v'' \mod (W \oplus z(W)),
\end{equation}
\begin{equation}
\label{r3}
z(v'+v'')=t_{v'+v''}(v'+v'') \mod (W \oplus z(W)).
\end{equation}	
 Subtracting the equalities \eqref{r1} and \eqref{r2} from the 	equality  \eqref{r3} we get
 $$(t_{v'+v''} -t_{v'})v'+ (t_{v'+v''} -t_{v''})v'' \in  (W \oplus z(W)). $$
 In view of the linear independence of $v'$, $v''$ modulo $W \oplus z(W)$ we get
 $$t_{v'}=t_{v''}=t_{v'+v''}.$$
 Hence there exists a scalar $t \in \mathbb{F}$ such that
 $$z(v)=tv \mod  (W \oplus z(W))$$
 for an arbitrary $v \in V$.
 The image of the linear transformation $z-t \cdot \text{Id}_V$ lies in $W \oplus z(W).$ Hence $$\dim_{\mathbb{F}}(z-t \cdot \text{Id}_V) < \alpha \quad \text{and} \quad z-t \cdot \text{Id}_V \in I_{\alpha}.$$ This completes the proof of the proposition. 	
\end{proof}

 \begin{lemma}\label{lemma6} \
 	\begin{enumerate}
 		\item[$(1)$]\label{part1L6}  Let   $\aleph_0 < \alpha \le \dim_{\mathbb{F}} V $. Then
 	$I_{\alpha}=[I_{\alpha},I_{\alpha}].$
 	\item[$(2)$]\label{part2L6} $[I_{\aleph_0},\mathfrak{gl}(V)]=[I_{\aleph_0}, I_{\aleph_0}]$ has co-dimension $1$ in $I_{\aleph_0}$.
 	\end{enumerate}
 \end{lemma}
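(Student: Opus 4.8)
The statement splits into two independent assertions, which I would treat separately; the unifying theme is that the only obstruction to writing an operator as a sum of commutators is a ``trace,'' and such a trace is available only at the countable level.

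For part $(1)$ the inclusion $[I_{\alpha},I_{\alpha}]\subseteq I_{\alpha}$ is immediate, since $I_{\alpha}$ is an associative and hence Lie ideal; the content is the reverse inclusion, i.e. every $\varphi\in I_{\alpha}$ must be exhibited as a sum of commutators of elements of $I_{\alpha}$. I would work with rank-one operators $u\otimes f$ (acting by $(u\otimes f)(v)=f(v)u$), which satisfy the basic identity $[u\otimes f,\,c\otimes g]=f(c)\,u\otimes g-g(u)\,c\otimes f$. Setting $r=\dim_{\mathbb{F}}\varphi(V)<\alpha$, the decisive point is the cardinal inequality $\aleph_0\cdot r<\alpha$, which holds precisely because $\alpha>\aleph_0$: it provides room inside $\ker\varphi$ (whose dimension equals $\dim_{\mathbb{F}}V\ge\alpha$) for countably many independent copies of the image, the whole amplification still having dimension $<\alpha$, so that all operators built from it lie in $I_{\alpha}$.

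The key step is an \emph{infinite shift}. Choosing a biorthogonal system $(c_i,f_i)_{i\ge 0}$ with $f_i(c_j)=\delta_{ij}$, each $f_i$ vanishing off the countable span, and putting $P=\sum_{i\ge 0}c_i\otimes f_{i+1}$ and $Q=\sum_{i\ge 0}c_{i+1}\otimes f_i$, a direct computation gives $PQ-QP=c_0\otimes f_0$, so every rank-one ``diagonal'' operator is a single commutator in $I_{\alpha}$; the off-diagonal rank-one operators $u\otimes f$ with $f(u)=0$ are commutators directly from the identity above. To produce $\varphi$ of arbitrary, possibly infinite, rank in one stroke I would run this shift ``vectorially'' over a basis of the image at every level, so that the telescoping collapses all higher levels and leaves exactly $\varphi$ at level $0$. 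I expect the main obstacle to be precisely this bookkeeping: the naive two-operator shift leaves a remainder supported on the first copy of the image, and the argument must be arranged so that this remainder is pushed through infinitely many levels and disappears (equivalently, one reduces first to idempotents and then applies the shift). This is also the exact point at which the hypothesis $\alpha>\aleph_0$ is indispensable.

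For part $(2)$ the organizing invariant is the trace $\mathrm{tr}\colon I_{\aleph_0}\to\mathbb{F}$, well defined because $I_{\aleph_0}$ consists of finite-rank operators, and a nonzero linear functional (e.g. $\mathrm{tr}(c\otimes f)=f(c)$), so $\ker\mathrm{tr}$ has codimension $1$. I would prove $[I_{\aleph_0},I_{\aleph_0}]=\ker\mathrm{tr}$. The inclusion $\subseteq$ follows from $\mathrm{tr}(ab)=\mathrm{tr}(ba)$ for finite-rank $a,b$, checked on a rank-one decomposition of $a$. For $\supseteq$, given $\varphi\in\ker\mathrm{tr}$ I would choose a basis adapted to $\varphi$ so that $\varphi$ lies in a finite corner $\mathrm{span}\{e_{ij}:i,j\in F\}\cong M_{|F|}(\mathbb{F})$ and invoke the classical fact that a trace-zero matrix is a sum of commutators, via $e_{ij}=[e_{ik},e_{kj}]$ for $i\neq j$ and $e_{ii}-e_{jj}=[e_{ij},e_{ji}]$, all matrix units lying in $I_{\aleph_0}$. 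Finally, for $[I_{\aleph_0},\mathfrak{gl}(V)]=[I_{\aleph_0},I_{\aleph_0}]$ the inclusion $\supseteq$ is trivial, while for $\subseteq$ I would note that for $a\in I_{\aleph_0}$ and arbitrary $b\in\mathfrak{gl}(V)$ the commutator $[a,b]$ is again finite-rank and, by the same rank-one computation, satisfies $\mathrm{tr}(ab)=\mathrm{tr}(ba)$; hence $[a,b]\in\ker\mathrm{tr}=[I_{\aleph_0},I_{\aleph_0}]$. The codimension-$1$ assertion is then immediate.
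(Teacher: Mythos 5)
Your proposal is correct, and on part $(1)$ it takes a genuinely different route from the paper. The paper handles part $(1)$ in a few lines by outsourcing the hard step: it writes $V=V'\oplus V''$ with $\varphi(V)\subseteq V'$ and $\aleph_0\le\dim_{\mathbb{F}}V'<\alpha$, splits $\varphi=\varphi_1+\varphi_2$ into a corner map $\varphi_1:V'\to V'$ and an off-corner map $\varphi_2:V''\to V'$, observes $\varphi_2=[p,\varphi_2]$ for the projection $p$ onto $V'$, and quotes \cite{14_B} (perfection of $\mathfrak{gl}(W)$ for infinite-dimensional $W$) to dispose of $\varphi_1$. Your shift construction in effect inlines the proof of that citation, making the lemma self-contained and yielding the stronger conclusion that every $\varphi\in I_{\alpha}$ is a \emph{single} commutator of two elements of $I_{\alpha}$. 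Moreover, the bookkeeping you flag as the main obstacle closes without residue, so your hedge is unnecessary: choose $W_0$ of dimension $<\alpha$ containing $\varphi(V)$ together with a complement of $\ker\varphi$, so that $V=W_0\oplus D$ with $D\subseteq\ker\varphi$ and $\varphi_0:=\varphi|_{W_0}\in\text{End}_{\mathbb{F}}(W_0)$; pick independent copies $\theta_i:W_0\to W_i\subseteq D$ (with $\theta_0=\text{Id}_{W_0}$, $W_i$ pairwise independent), which is possible since $\dim_{\mathbb{F}}D=\dim_{\mathbb{F}}V\ge\alpha>\aleph_0\cdot\dim_{\mathbb{F}}W_0$; let $S$ be the forward shift ($S|_{W_i}=\theta_{i+1}\theta_i^{-1}$), $P$ the backward shift ($P|_{W_{i+1}}=\theta_i\theta_{i+1}^{-1}$, $P(W_0)=(0)$), and $E$ the block diagonal $\theta_i\varphi_0\theta_i^{-1}$ on each $W_i$, all three vanishing off $\bigoplus_i W_i$. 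A direct check gives $P(SE)=E$, while $(SE)P$ agrees with $E$ off $W_0$ and vanishes on $W_0$, whence $[P,SE]=\varphi$; and $P,\,SE\in I_{\alpha}$ because their ranks are at most $\aleph_0\cdot\dim_{\mathbb{F}}W_0<\alpha$ --- exactly the cardinal inequality you isolated, which indeed fails for $\alpha=\aleph_0$. (You are also right that the rank-one reduction alone cannot suffice for part $(1)$, since elements of $I_{\alpha}$ may have infinite rank while Lie brackets span only finite sums; the vectorial shift is genuinely needed.) On part $(2)$ you follow the same approach as the paper --- the trace functional on $I_{\aleph_0}$ --- but you supply details the paper leaves implicit: the paper asserts without proof that $\text{tr}(\varphi)=0$ if and only if $\varphi\in[I_{\aleph_0},I_{\aleph_0}]$, and your matrix-unit argument ($e_{ij}=[e_{ik},e_{kj}]$ for $i\ne j$, $e_{ii}-e_{jj}=[e_{ij},e_{ji}]$) is precisely the missing converse; where the paper proves $\text{tr}([I_{\aleph_0},\mathfrak{gl}(V)])=(0)$ by a $2\times 2$ block-matrix computation in an adapted basis, you instead verify $\text{tr}(ab)=\text{tr}(ba)$ for finite-rank $a$ and arbitrary $b$ on a rank-one decomposition, which is equivalent and arguably cleaner. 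The one small point to nail down there is well-definedness: if you define $\text{tr}$ by $\text{tr}(u\otimes f)=f(u)$ extended linearly, you must check independence of the rank-one decomposition (or define $\text{tr}(\varphi)$ via restriction to a finite-dimensional $V''\supseteq\varphi(V)$ as the paper does, and check independence of $V''$); both verifications are routine.
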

\begin{proof} 	Let $\varphi \in I_{\alpha}$, $\dim_{\mathbb{F}} \varphi(V) < \alpha$. If 	$\aleph_0 < \alpha$ then there exists a subspace $V'\subset V$ such that $$\varphi(V) \subseteq V' \quad \text{and} \quad  \aleph_0 \le  \dim_{\mathbb{F}} V'< \alpha  .$$ Choose a subspace $V'' \subset V$ such that $V=V' \oplus V''$ is a direct sum. The linear transformation $\varphi$ can be decomposed as $\varphi= \varphi_1+\varphi_2$, where $$\varphi_1: V' \to V' , \ \varphi_1(V'')=(0), \quad \text{and} \quad \varphi_2: V'' \to V' , \ \varphi_2(V')=(0).$$

Let $p$ be a projection from $V$ to $V'$, i.e. $$p|_{V'}=\text{Id}_{V'}, \quad p(V'')=(0).$$ We notice that the images of $\varphi_1$, $\varphi_2$, $p$ lie in $V'.$ Therefore, $\varphi_1, \varphi_2,p \in I_{\alpha}$.
	
By \cite{14_B}, we have $$\varphi_1|_{V'} \in [\mathfrak{gl}(V'),\mathfrak{gl}(V')].$$
Hence $\varphi_1 \in [I_{\alpha}, I_{\alpha}].$ Furthermore,  $$\varphi_2=[p,\varphi_2] \in [I_{\alpha}, I_{\alpha}].$$ We proved that $\varphi \in [I_{\alpha}, I_{\alpha}]$. This completes  the proof of  part (1) of the lemma.

Now, consider the ideal $I_{\aleph_0}$. A linear transformation $\varphi$ lies in $I_{\aleph_0}$ if and only if the subspace $V'=\varphi(V)$ is finite-dimensional.

There exists a finite-dimensional subspace $V'' \subset V$ such that $V'\subseteq V''$ and $\varphi(V'')=V'.$ Let $\text{tr}(\varphi)$ be the trace of the restriction $$\varphi|_{V''}\in \mathfrak{gl}(V'').$$ It is easy to see that
\begin{enumerate}[(i)]
	\item $\text{tr}(\varphi)$ does not depend on a choice of the subspace $V'',$
	 \item $\text{tr}: I_{\aleph_0} \to \mathbb{F}$ is a linear functional,
	 \item $\text{tr}(\varphi)=0$ if and only if $\varphi\in [I_{\aleph_0},I_{\aleph_0}].$
\end{enumerate}
This implies that $[I_{\aleph_0},I_{\aleph_0}]$ has co-dimension 1 in $I_{\aleph_0}$.

It remains to show that $$[I_{\aleph_0},\mathfrak{gl}(V)]=[I_{\aleph_0},I_{\aleph_0}]$$ or equivalently
  $$\text{tr}([I_{\aleph_0},\mathfrak{gl}(V)])=(0).$$
Choose $\varphi \in I_{\aleph_0}$  and $\psi \in \mathfrak{gl}(V).$ Since $\dim_{\mathbb{F}}\varphi( V)<\infty$ the subspace $\ker \varphi$ has a finite co-dimension in $V.$ Hence, there exists a finite-dimensional subspace $V_1\subset V$ such that $\varphi(V)\subset V_1,$ $\varphi(V_1)=\varphi(V)$ and $V=V_1 + \ker \varphi.$ Let $\dim_{\mathbb{F}} V_1 =n.$ Choose a subspace $V_2 \subset \ker \varphi$ such that $$ \ker \varphi = (V_1 \cap \ker \varphi) \oplus V_2$$ is a direct sum. Then $V= V_1 \oplus
  V_2$ is a direct sum. Let $B_1,$ $B_2$ be bases of the subspaces $V_1,$ $V_2$, respectively.  In the basis $B=B_1 \cup B_2$ of the vector space $V$ the linear transformations $\varphi,$ $\psi$ have (infinite) matrices  $$\begin{pmatrix}
  	a & \rvline & \zero \\
  	\hline
    	\zero & \rvline &\zero
  \end{pmatrix}, \quad \begin{pmatrix}
  b_{11} & \rvline & b_{12} \\
  \hline
  b_{21} & \rvline & b_{22}
  \end{pmatrix}$$ respectively, where the blocks $a,$ $b_{11}$ are $n\times n$ matrices. We have $$\left[ \begin{pmatrix}
a & \rvline & \zero \\
\hline
\zero & \rvline &
\zero
\end{pmatrix},
\begin{pmatrix}
b_{11}
& \rvline & b_{12} \\
\hline
b_{21} & \rvline &
b_{22}
\end{pmatrix} \right]= \begin{pmatrix}
[a,b_{11}]
& \rvline & ab_{12} \\
\hline
-b_{21} a & \rvline &
0
\end{pmatrix} .$$

The trace of this infinite matrix is equal to the trace of the $n \times n$ matrix $[a,b_{11}]$, i.e. is equal to 0. This completes the proof of the lemma. 	
\end{proof}

\begin{proof}[Proof of Theorem $\ref{Thm_ideals}$] Let $U$ be a proper  ideal of the Lie algebra $\mathfrak{gl}(V).$ Let $A=\text{End}_{\mathbb{F}}(V).$ Consider the ideal $\text{id}_{A}([U,U]).$ By Jacobson's Theorem, $\text{id}_{A}([U,U])$ equals  $(0),$  $I_{\alpha}$ for some $\alpha$, $\aleph_0 \le \alpha \le \dim_{\mathbb{F}}V,$ or $A.$ We will consider each of these cases separately.
	
\vspace{5pt}
	
\underline{Case 1.} Suppose that $\text{id}_A([U,U])=A$. Then by Lemma \ref{lemma 2}, we have 	$$[\text{id}_A([U,U]),A]=[A,A] \subseteq U.$$ 	By \cite{14_B}, we have $U=\mathfrak{gl}(V),$ which contradicts the assumption that $U$ is proper.

\vspace{5pt}
	
\underline{Case 2.} Suppose that $\text{id}_A([U,U])=(0)$. Then $[U,U]=(0)$.  It is easy to see that the algebra $A$ is prime. Indeed, let $\varphi,$ $\psi\in A$ be nonzero linear transformations $\varphi(v)\neq 0,$ $\psi(w)\neq 0,$ where $v,$ $w\in V.$ There exists a linear transformation $\chi : V \to V$ such that $\chi (\varphi(v))=w.$ Then $$\psi\, \chi\, \varphi(v)=\psi(w)\neq 0,$$ hence $\psi A \varphi\neq (0).$ By Lemma \ref{lemma3}, $U$ lies in the center of the algebra $A$. Hence $U=\mathbb{F} \cdot \text{Id}_V$.
	
\vspace{5pt}
	
\underline{Case 3.} Now, let $\text{id}_A([U,U])=I_{\alpha}$,  	$\aleph_0 \le \alpha \le \dim_{\mathbb{F}}V$. The  ideal $(U+I_{\alpha})/I_{\alpha}$ of the Lie algebra $(A/I_{\alpha})^{(-)}$  is abelian, i.e. 	$$[(U+I_{\alpha})/I_{\alpha}, (U+I_{\alpha})/I_{\alpha}]=(0).$$ 	Lemma \ref{lemma4} implies that the algebra $A/I_{\alpha}$ is prime. By Lemma \ref{lemma3}, the  ideal $(U+I_{\alpha})/I_{\alpha}$ lies in the center $C$ of the algebra $A/I_{\alpha}$. By Proposition \ref{prop5}, the center $C$ is $(\mathbb{F} \cdot \text{Id}_V+I_{\alpha})/I_{\alpha}$. Hence $$U \subseteq \mathbb{F} \cdot \text{Id}_V+I_{\alpha}.$$ 	On the other hand, by Lemma \ref{lemma 2}, $[I_{\alpha}, A] \subseteq U$. We proved that $$[I_{\alpha},A]\subseteq U \subseteq \mathbb{F} \cdot \text{Id}_V +I_{\alpha}.$$

If 	$\aleph_0 < \alpha$ then, by Lemma \ref{lemma6} (1), 	$$I_{\alpha} \subseteq U \subseteq \mathbb{F} \cdot \text{Id}_V +I_{\alpha},$$ 	which implies $U=I_{\alpha}$ or $U= \mathbb{F} \cdot \text{Id}_V +I_{\alpha}.$ 	

Let now $\alpha=\aleph_0$. Then, by Lemma \ref{lemma6} (2), the co-dimension of $[I_{\aleph_0}, A]=[I_{\aleph_0},I_{\aleph_0}]$ in $\mathbb{F}\cdot \text{Id}_V +I_{\aleph_0}$ is equal to $2$. From proved above follows that for an arbitrary subspace $U$,  	$$[I_{\aleph_0}, A] \subseteq U \subseteq  \mathbb{F} \cdot \text{Id}_V +I_{\aleph_0} ,$$ 	  we have $$[U,A] \subseteq [ \mathbb{F} \cdot \text{Id}_V +I_{\aleph_0}, A]=[I_{\aleph_0}, A] \subseteq U.$$ Hence, $U$ is an ideal of the Lie algebra $\mathfrak{gl}(V).$ \end{proof}

Notice that in the case of a countable-dimensional vector space $V$  our description of ideals coincides with the description of ideals in \cite{Hol-inf-Lie}.

\end{document}